\setlist{topsep=0mm,partopsep=0mm,itemsep=1mm}
\theoremstyle{plain}
\newtheorem{thm}{Theorem}
\newtheorem{impcor}[thm]{Corollary}
\newtheorem{lemma}{Lemma}[section]
\newtheorem{cor}[lemma]{Corollary}
\newtheorem*{thm*}{Theorem}
\theoremstyle{definition}
\newtheorem{que}[lemma]{Question}
\theoremstyle{remark}
\newcommand{\N}{\mathbb{N}}
\newcommand{\Z}{\mathbb{Z}}
\newcommand{\R}{\mathcal{R}}
\renewcommand{\L}{\mathcal{L}}
\renewcommand{\H}{\mathcal{H}}
\newcommand{\J}{\mathcal{J}}
\begin{document}

\title[Number of subsemigroups of direct products]{On the number of subsemigroups of direct products involving the free monogenic semigroup}
\author[A. Clayton, N. Ru\v{s}kuc]{Ashley Clayton \& Nik Ru\v{s}kuc}
\address{School of Mathematics and Statistics, University of St Andrews, St Andrews, Scotland, UK}
\email{$\{$ac323,nr1$\}$@st-andrews.ac.uk}

\keywords{Subdirect product, subsemigroup, free monogenic semigroup}
\subjclass[2010]{Primary: 20M99}

\begin{abstract}
The direct product $\N\times\N$ of two free monogenic semigroups contains uncountably many pairwise non-isomorphic subdirect products.
Furthermore, the following hold for $\N\times S$, where $S$ is a finite semigroup.
It  contains only countably many pairwise non-isomorphic  subsemigroups if 
and only if $S$ is a union of groups. And it contains only countably many pairwise non-isomorphic subdirect products if and only if every element of $S$ has a relative left- or right identity element.
\end{abstract}

\maketitle

\section{Introduction}
\label{sec1}

It is well known in group theory that the subgroups of the direct product of two groups can exhibit fairly wild behaviour, even if the factors are well-behaved.
For instance, Baumslag and Roseblade showed in \cite[Theorem 1]{baumslaugandroseblade}
that there are uncountably many pairwise non-isomorphic subgroups of the direct product $F_2\times F_2$ of two free groups of rank $2$.
In fact, the subgroups they construct are all subdirect products, meaning that they project onto both copies of $F_2$. Among them, there are subgroups which are:
not finitely generated \cite[Example 3]{bridsonmiller};
finitely generated but not finitely presented \cite{Grunewald78};
 finitely generated but with an undecidable membership problem %in $\F_{2}\times \F_{2}$
  \cite{Mihailova66}.

On the other hand, the subgroups of the direct product $\Z \times\Z $
of two cyclic groups are much more benign: each non-trivial such subgroup is isomorphic to $\Z$ or $\Z\times\Z$; hence they are all finitely generated, finitely presented, there are only countably many of them, and only finitely many up to isomorphism.
The subsemigroups of the free monogenic semigroup $\N=\{1,2,3,\dots\}$ are also reasonably tame, even though they are more complicated than subgroups of $\Z $.
Every such subsemigroup has the form
$A\cup B$, where $A$ is finite, and $B=\{ nd\::\: n\geq n_0\}$ for some $n_0,d\in\N$;
see \cite{sitandsiu}.
Consequently they are all finitely generated and finitely presented, and there are only countably many of them. 

One might therefore hope that this tame behaviour carries over to the subsemigroups of $\N\times\N$.
This, however, is not the case, and we prove the following:

\begin{thm}\label{thmA}
There are uncountably many pairwise non-isomorphic subsemigroups of $\N \times \N$.
\end{thm}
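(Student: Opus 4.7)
The plan is to exhibit a family of $2^{\aleph_0}$ subsemigroups of $\N\times\N$ and to show, via a counting argument using groups of fractions, that they fall into $2^{\aleph_0}$ isomorphism classes.

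For each $A\subseteq\N$ I would set
\[
T_A \;=\; \bigl\{(n,m)\in\N\times\N : n\geq 2,\ m\geq 2\bigr\} \;\cup\; \bigl\{(1,a):a\in A\bigr\}.
\]
A short case analysis on the first coordinates of the two summands shows that $T_A$ is closed under coordinatewise addition, so it is a subsemigroup of $\N\times\N$. Distinct subsets $A$ manifestly give distinct $T_A$, so this family already has cardinality $2^{\aleph_0}$.

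To pass from $2^{\aleph_0}$ subsemigroups to $2^{\aleph_0}$ isomorphism classes, I would use that every $T_A$ is commutative and cancellative, hence embeds into its group of fractions $G(T_A)$, and any semigroup isomorphism $T_A\to T_B$ extends uniquely to a group isomorphism $G(T_A)\to G(T_B)$. Since each $T_A$ contains $(2,2),(2,3),(3,2)$ and the differences $(2,3)-(2,2)=(0,1)$, $(3,2)-(2,2)=(1,0)$ generate $\Z^2$, the enveloping group $G(T_A)$ equals all of $\Z^2$ viewed as a subgroup of $\Z^2$. Consequently, every isomorphism $T_A\cong T_B$ is the restriction of some $\varphi\in\mathrm{GL}_2(\Z)$ satisfying $\varphi(T_A)=T_B$. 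For fixed $A$, each such $\varphi$ determines at most one $B$, so the isomorphism class of $T_A$ inside our family contains at most $|\mathrm{GL}_2(\Z)|=\aleph_0$ members. Partitioning $2^{\aleph_0}$ semigroups into classes of size at most $\aleph_0$ yields at least $2^{\aleph_0}/\aleph_0=2^{\aleph_0}$ classes, which is uncountable.

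The one step requiring care is the passage from an abstract semigroup isomorphism $T_A\cong T_B$ to a restriction of an element of $\mathrm{GL}_2(\Z)$. The extension to the group of fractions is a standard fact about cancellative commutative semigroups, but one must verify that \emph{both} enveloping groups are literally equal to $\Z^2$ inside the ambient $\Z^2$ (not merely abstractly isomorphic to it) so that the extension acts as a linear automorphism of the ambient lattice. Including the ``thick'' block $\{(n,m):n,m\geq 2\}$ in every $T_A$ is precisely what guarantees this, and is the reason for its presence in the definition.
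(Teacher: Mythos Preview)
Your argument is correct and takes a genuinely different route from the paper's. The paper builds, by a somewhat delicate inductive construction, an infinite \emph{strongly $3$-separating} set $M_\infty\subseteq\N$, and then shows directly that the semigroups $S_M=\langle (1,m):m\in M\rangle$ for distinct $M\subseteq M_\infty$ with $|M|\geq 3$ are pairwise non-isomorphic, by tracking indecomposable elements and using the arithmetical condition \ref{it:S1}. Your approach sidesteps this construction entirely: you take a single one-parameter family $\{T_A:A\subseteq\N\}$ whose members all have group of fractions equal to $\Z^2$, and then observe that any isomorphism between two of them must be the restriction of some $\varphi\in\mathrm{GL}_2(\Z)$, so each isomorphism class meets the family in at most countably many members. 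This is shorter and more conceptual, and the cardinality bookkeeping is clean; the paper's approach, by contrast, is fully constructive and produces an explicit uncountable family of \emph{pairwise} non-isomorphic subsemigroups, which it then leverages (via the indecomposable-generator structure of the $S_M$) to handle subdirect products in $\N^k$. Your family adapts to that setting too: restricting to $A$ with $1\in A$ makes each $T_A$ subdirect, and the same $\mathrm{GL}_2(\Z)$ counting argument goes through.
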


\begin{impcor}\label{corrC}
If $S$ and $T$ are semigroups containing elements of infinite order, then $S\times T$ contains uncountably many pairwise non-isomorphic subsemigroups.
\end{impcor}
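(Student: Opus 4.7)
The plan is to reduce Corollary C directly to Theorem A by finding a copy of $\N\times\N$ sitting inside $S\times T$.

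First, I would pick elements $s\in S$ and $t\in T$ of infinite order, which exist by hypothesis. An element of infinite order in a semigroup, by definition, generates a cyclic subsemigroup whose elements $s,s^2,s^3,\dots$ are pairwise distinct; such a subsemigroup is therefore isomorphic to the free monogenic semigroup $\N$ via $s^n\mapsto n$. Hence $\langle s\rangle\cong\N$ and $\langle t\rangle\cong\N$.

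Next, I would form the subsemigroup $\langle s\rangle\times\langle t\rangle$ of $S\times T$. Since direct product commutes with taking subsemigroups in the obvious way, this is isomorphic to $\N\times\N$.

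Finally, I would invoke Theorem A: the semigroup $\N\times\N$ contains uncountably many pairwise non-isomorphic subsemigroups. Via the embedding $\N\times\N\hookrightarrow S\times T$, each of these yields a subsemigroup of $S\times T$, and their pairwise non-isomorphism is preserved because non-isomorphism is an intrinsic (abstract) property of the semigroups in question, independent of the ambient semigroup.

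There is no real obstacle here: the statement is an immediate corollary once Theorem A has been established, and the argument amounts to little more than observing that the construction inside $\N\times\N$ embeds inside any direct product of semigroups containing copies of $\N$.
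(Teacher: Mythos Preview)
Your argument is correct and matches the paper's own proof essentially verbatim: both observe that $\langle s\rangle\times\langle t\rangle\cong\N\times\N$ sits inside $S\times T$ and then invoke Theorem~A.
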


\begin{thm}\label{thmB}
For any $k \geq 2$, the direct power $\N^{k}$ contains uncountably many pairwise non-isomorphic subdirect products.
\end{thm}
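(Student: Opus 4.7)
The plan is to handle the base case $k=2$ first, constructing an uncountable family of pairwise non-isomorphic subdirect products of $\N\times\N$, and then to lift the construction to arbitrary $k\geq 3$ via a simple diagonal embedding. For each infinite $A\subseteq\N_{\geq 2}$ define
\[
S_A := \{(n,1) : n\in A\}\cup\{(n,m)\in\N\times\N : m\geq 2\}.
\]
Closure under coordinate-wise addition is immediate: any sum with a summand of second coordinate $\geq 2$ has second coordinate $\geq 2$, and the sum of two elements of $A\times\{1\}$ has second coordinate $2$. Both projections of $S_A$ onto $\N$ are surjective, so $S_A$ is a subdirect product of $\N\times\N$.

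For the lift, given any subdirect product $S$ of $\N\times\N$, set
\[
\widehat{S} := \{(s_1,s_2,s_2,\ldots,s_2)\in\N^k : (s_1,s_2)\in S\}.
\]
Then $\widehat{S}$ is closed under coordinate-wise addition; its projection to the first coordinate is $\pi_1(S)=\N$ and to each subsequent coordinate is $\pi_2(S)=\N$, making $\widehat{S}$ a subdirect product of $\N^k$; and the projection onto the first two coordinates is a semigroup isomorphism $\widehat{S}\to S$. Hence, given an uncountable non-isomorphic family $\{S_A\}$ of subdirect products of $\N\times\N$, the family $\{\widehat{S_A}\}$ is an uncountable non-isomorphic family of subdirect products of $\N^k$.

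The main obstacle is proving that uncountably many of the $S_A$'s are pairwise non-isomorphic. The strategy is to recover $A$ intrinsically from the abstract semigroup. A direct calculation gives the set of indecomposable elements of $S_A$ as
\[
\{(n,1) : n\in A\}\cup\{(1,m) : m\geq 2\}\cup\{(n,2) : n\geq 2,\ n\notin A+A\}\cup\{(n,3) : 2\leq n\leq \min A\};
\]
by analysing how these indecomposables combine additively---for instance, by counting, for each pair $x,y$ of indecomposables, the number of ways $x+y$ further decomposes into two indecomposables (an isomorphism-invariant quantity)---one aims to characterise the ``horizontal axis'' $\{(n,1) : n\in A\}$ intrinsically, and thence to recover $A$ up to at most a countable ambiguity. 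Since there are $2^{\aleph_0}$ infinite subsets $A\subseteq\N_{\geq 2}$, this would yield uncountably many isomorphism classes.
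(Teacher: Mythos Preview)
Your construction of the $S_A$ and the diagonal lift to $\N^k$ are both correct, but the heart of the argument---showing that uncountably many $S_A$ are pairwise non-isomorphic---is missing. What you have written is a plan (``one aims to characterise the horizontal axis intrinsically''), not a proof, and it is not clear that the specific invariant you propose (counting decompositions of $x+y$ into pairs of indecomposables) actually separates the set $\{(n,1):n\in A\}$ from the other indecomposables in a way that lets you read off $A$. For instance, for \emph{every} indecomposable $x$ of $S_A$ and every indecomposable $y$, the sum $x+y$ is decomposable, so the coarsest version of your test says nothing; and the finer counts you allude to depend on $A$, $A+A$ and $\min A$ simultaneously in a way you have not untangled. (There \emph{is} a clean route: since each $S_A$ generates $\Z^2$ as a group, any isomorphism $S_A\to S_B$ extends to an element of $GL_2(\Z)$ preserving the closed positive quadrant, hence is the identity or the coordinate swap; checking these two cases forces $A=B$. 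But this is not the argument you sketched.)

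By contrast, the paper avoids this difficulty entirely by recycling Theorem~\ref{thmA}. Having already built an infinite strongly $3$-separating set $M_\infty$ with $1\in M_\infty$ and shown that the semigroups $S_M=\langle (1,m):m\in M\rangle$ for $M\subseteq M_\infty$ are pairwise non-isomorphic, the paper simply observes that when $1\in M$ the semigroup $T_M=\langle (1,\dots,1,m):m\in M\rangle\leq\N^k$ contains the diagonal, is therefore subdirect, and is isomorphic to $S_M$ via $(n,\dots,n,p)\mapsto(n,p)$. So the non-isomorphism work is already done, and nothing further is needed. Your lifting idea is the same as the paper's; the divergence (and the gap) is entirely in the base case $k=2$.
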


We also investigate the subsemigroups of the direct products of the form $\N\times S$, where $S$ is a finite semigroup, and prove that even there we mostly have uncountably many subsemigroups or subdirect products, characterising in the process precisely when the number is countable:

\begin{thm}\label{thmD} The following are equivalent for a finite semigroup $S$:
\begin{enumerate}[label=\textup{(\roman*)}, widest=(iii), leftmargin=10mm]
\item 
\label{it:D1}
$\N \times S$ has only countably many subsemigroups;
\item 
\label{it:D2}
$\N \times S$ has only countably many pairwise non-isomorphic subsemigroups;
\item 
\label{it:D3}
$S$ is a union of groups.
\end{enumerate}
\end{thm}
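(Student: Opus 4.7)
The implication (i)$\Rightarrow$(ii) is immediate, so I plan to prove (iii)$\Rightarrow$(i) by analysis of fibres, and (ii)$\Rightarrow$(iii) by contrapositive via a parametrised family. For (iii)$\Rightarrow$(i): given $T \leq \N \times S$, the fibres $T_s := \{n \in \N : (n,s) \in T\}$ over the finite subsemigroup $\pi_S(T) \leq S$ determine $T$. Since $S$ is a union of groups, each $s \in \pi_S(T)$ lies in a maximal subgroup with identity $e_s$, and $s e_s = s$ gives $T_s + T_{e_s} \subseteq T_s$. Now $T_{e_s}$ is an additive subsemigroup of $\N$, hence finitely generated and eventually an arithmetic progression of some common difference $d$; so $T_s$ is determined by a union of residue classes mod $d$ together with a finite exceptional set. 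This confines each $T_s$, and thus $T$, to a countable family.

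For (ii)$\Rightarrow$(iii) (contrapositive): assume $S$ is not a union of groups, so some element has index $\geq 2$. Replacing by a suitable power, I may assume $s \in S$ has index exactly $2$ and some period $m \geq 1$. Consider the family
\[
\mathcal{F} := \{X \subseteq \{1, m+1, 2m+1, \dots\} : 1 \in X\},
\]
of cardinality $2^{\aleph_0}$, and for each $X \in \mathcal{F}$ set $T_X := \langle \{(x,s) : x \in X\} \rangle \leq \N \times S$. The $s$-layer $A_X := \{(x,s) : x \in X\}$ equals $T_X \setminus T_X^2$: any product of two or more generators has second coordinate $s^k$ with $k \geq 2$, and index $\geq 2$ prevents $s^k = s$. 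So any isomorphism $\phi : T_X \to T_Y$ restricts to a bijection $\alpha : X \to Y$.

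For $a, b \in X$, both are $\equiv 1 \pmod m$, so $a + b \equiv 2 \pmod m$ and hence $s^{a+b} = s^2$. The equality $(a,s)(b,s) = (1,s)^{a+b}$ thus holds in $T_X$. Applying $\phi$ and comparing first coordinates gives $\alpha(a) + \alpha(b) = (a+b)\,\alpha(1)$; setting $a = b$ yields $\alpha(a) = qa$ for all $a \in X$, with $q := \alpha(1) \in \N$. Hence $Y = qX$, and $1 \in Y$ forces $q = 1$, so $X = Y$. This produces $2^{\aleph_0}$ pairwise non-isomorphic $T_X$, contradicting (ii).

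The main obstacle is this isomorphism analysis---forcing $\alpha$ to be \emph{linear} rather than merely affine. The key device is restricting $X$ to a single residue class mod $m$, so that the ``period collapse'' $s^{a+b} = s^2$ holds uniformly for all $a, b \in X$ and rules out a translation term in $\alpha$; without this restriction, translates $X \mapsto X + c$ could give isomorphic $T_X$ and collapse the iso-classes into a countable set.
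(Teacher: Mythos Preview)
Your proof is correct, and in both non-trivial directions it differs from the paper's argument.

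For (iii)$\Rightarrow$(i), the paper decomposes $\N\times S$ as the finite disjoint union $\bigcup_x(\N\times H_x)$ over the $\H$-classes of $S$, observes that any $T\leq\N\times S$ meets each $\N\times H_x$ in a subsemigroup, and invokes the separately proved Lemma that every subsemigroup of $\N\times G$ ($G$ a finite group) is finitely generated. Your fibre-by-fibre argument is more direct: it extracts only the single consequence $se_s=s$ of complete regularity, uses it to show each fibre $T_s$ is eventually periodic (via $T_s+T_{e_s}\subseteq T_s$), and counts. This avoids the detour through finite generation, at the cost of the description of $T_s$ being slightly less tidy than ``finitely generated''. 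One point worth making explicit: $e_s$ is a power of $s$, so $e_s\in\pi_S(T)$ and $T_{e_s}$ is nonempty; this is needed for the argument to go through.

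For (ii)$\Rightarrow$(iii), the paper's family is $S_M=\langle (1,x^k),\,(m,x):m\in M\rangle$ with $x^k$ idempotent, and the separating relation is $(1,x^k)^{mk}=(m,x)^k$; the idempotent generator is pinned down first, after which the $m$'s are read off. Your family $T_X=\langle (x,s):x\in X\rangle$ dispenses with the idempotent generator, at the price of two devices: reducing to index exactly $2$ (so that $s^{a+b}=s^2$ genuinely holds), and restricting $X$ to a single residue class mod $m$ containing $1$ (so that $a+b\equiv 2\pmod m$ uniformly). The payoff is the clean relation $(a,s)(b,s)=(1,s)^{a+b}$, which forces $\alpha$ to be linear rather than affine; the paper achieves rigidity instead by anchoring $\varphi$ at the distinguished idempotent generator. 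Both routes are short; yours uses fewer generators but more preprocessing of $s$.
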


\begin{thm}\label{thmE} The following are equivalent for a finite semigroup $S$:
\begin{enumerate}[label=\textup{(\roman*)}, widest=(iii), leftmargin=10mm]
\item 
\label{it:E1}
$\N \times S$ has only countably many subdirect products;
\item 
\label{it:E2}
$\N \times S$ has only countably many pairwise non-isomorphic subdirect products;
\item 
\label{it:E3}
For every $s \in S$, there exists some $t \in S$ such that at least one of $ts = s$ or $st = s$ holds.
\end{enumerate}
\end{thm}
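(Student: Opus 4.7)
My plan is to prove the cycle (i) $\Rightarrow$ (ii) $\Rightarrow$ (iii) $\Rightarrow$ (i); the first implication is immediate. Throughout, for a subdirect product $T \leq \N \times S$ and $u \in S$, I write $T_u := \{n \in \N : (n,u) \in T\}$, so that $T_u \neq \emptyset$ for every $u$ by subdirectness.

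\textbf{(iii) $\Rightarrow$ (i).} For each $s \in S$, take a witness $t$ from (iii), say with $ts = s$. By finiteness of $S$ some power $e := t^k$ is idempotent in $\langle t \rangle$, and iterating $ts = s$ yields $es = s$. Hence every $s \in S$ admits an idempotent $e(s) \in S$ with $e(s)s = s$ or $s e(s) = s$. Since $e(s)$ is idempotent, $T_{e(s)}$ is a subsemigroup of $(\N, +)$, so by the structure of such subsemigroups it consists of a finite set together with a final tail $\{kd : k \geq k_0\}$ of common difference $d := \gcd T_{e(s)}$. The containment $T_{e(s)} + T_s \subseteq T_s$ (or its right-handed analogue) then forces
\[
T_s \;=\; F_s \;\cup\; \bigcup_{r \in R_s} \{n_r + jd : j \geq 0\},
\]
with $F_s$ finite, $R_s \subseteq \{0, \dots, d-1\}$, and thresholds $n_r \in \N$. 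Only countably many such sets arise for each $d$, and only countably many $d$ occur; since $T$ is determined by the finite tuple $(T_u)_{u \in S}$ drawn from a countable family, only countably many $T$ exist.

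\textbf{(ii) $\Rightarrow$ (iii), contrapositive.} Fix $s \in S$ with $ts \neq s$ and $st \neq s$ for every $t \in S$; in particular $s^2 \neq s$. The key observation is that $(n,s)$ can never be a product $(a,x)(b,y)$ with $x = s$ or $y = s$, since then $sy, xs \neq s$. When $s \notin S \cdot S$ altogether, the set
\[
T_A \;:=\; \bigl(\N \times (S \setminus \{s\})\bigr) \;\cup\; \bigl(A \times \{s\}\bigr)
\]
is a subsemigroup (no product lands in the $s$-fibre) and a subdirect product for every non-empty $A \subseteq \N$; varying $A$ over the $2^{\aleph_0}$ non-empty subsets of $\N$ yields $2^{\aleph_0}$ subdirect products. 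When $s \in S \cdot S$ a refinement is required: I would replace $\N \times (S \setminus \{s\})$ by a sufficiently sparse subsemigroup of $\N \times S$ so that the finitely many forced sums arising from decompositions $s = xy$ (with $x,y \neq s$) constrain $A$ only on a small set, keeping a $2^{\aleph_0}$-parameter family of free choices, possibly parametrised by some fibre $T_u$ with $u \neq s$ rather than $T_s$ itself.

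The main obstacle is promoting these $2^{\aleph_0}$ subdirect products to $2^{\aleph_0}$ pairwise non-isomorphic ones. I would extract a semigroup-theoretic invariant of $T_A$ that recovers $A$ up to bounded ambiguity; a natural candidate is the intrinsically-defined chain of principal right ideals $\{x T_A : x \in T_A \setminus T_A^2\}$, whose ``gap sizes'' (the cardinalities $|xT_A \setminus x'T_A|$ for consecutive chain elements) should encode the consecutive-difference sequence of $A$. Isolating such an invariant uniformly across all finite $S$ failing (iii), particularly in the decomposable case $s \in S \cdot S$ where the $s$-fibre is entangled with the other fibres, is the principal technical hurdle, paralleling the invariant arguments needed for Theorem~\ref{thmA}.
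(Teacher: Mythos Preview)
Your argument for (iii) $\Rightarrow$ (i) is sound, though it differs from the paper's: rather than counting the possible fibres $T_s$ directly, the paper shows every subdirect product $T$ is finitely generated, by taking $m$ to be the lcm of one witness $m_s$ for each $s$, proving $S_n \subseteq S_{n+m}$ for the sets $S_n = \{s : (n,s) \in T\}$, and concluding that $(S_n)$ is eventually periodic. Your counting argument is a legitimate alternative, though the precise form you claim for $T_s$ needs a touch more care (you only get $T_s + k_0 d \subseteq T_s$ immediately, not period $d$; the counting still goes through).

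The genuine gap is in (ii) $\Rightarrow$ (iii). You construct $2^{\aleph_0}$ subdirect products but, as you yourself flag, you do not establish that uncountably many are pairwise non-isomorphic; the ideal-chain invariant is only a suggestion, and the case $s \in S \cdot S$ is left entirely open. The paper avoids the invariant-extraction problem by a different construction. It first observes, via an iteration trick, that the hypothesis also forces $ust \neq s$ for all $u,t \in S$; this is what handles your two cases uniformly. Then, with $k>1$ chosen so that $s^k$ is idempotent and $M \subseteq \N \setminus (2\N \cup \{1\})$, it sets
\[
S_M := \bigl\langle (1,s^k),\ (2,t)\ (t \in S\setminus\{s,s^k\}),\ (m,s)\ (m \in M)\bigr\rangle.
\]
The point of this design is that every listed generator is indecomposable (here $ust \neq s$ is essential for the $(m,s)$'s), and that the relation $(1,s^k)^{mk}=(m,s)^k$ links them. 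Any isomorphism $\varphi:S_M\to S_N$ must permute indecomposables; applying $\pi_1\varphi$ to that relation, together with a short case analysis showing $\pi_1\varphi(1,s^k)=1$, recovers $M$ from the isomorphism type of $S_M$. This replaces your hoped-for ideal-chain invariant with a concrete one, and it works without splitting into the $s\in S\cdot S$ and $s\notin S\cdot S$ cases.
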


We remark that the class of semigroups described in Theorem \ref{thmD} is strictly contained within the class described in Theorem \ref{thmE}. Indeed, any monoid satisfies Theorem \ref{thmE} (iii), and there exist monoids that are not unions of groups.

\section{Subsemigroups of $\N\times\N$}
\label{sec2}
\setcounter{thm}{0}

This section is devoted to proving Theorem \ref{thmA}, Corollary \ref{corrC} and Theorem \ref{thmB}. We begin by introducing a certain family of subsemigroups of $\N \times\N $ as follows. For $M\subseteq \N $, we let $S_{M}$ be the subsemigroup of $\N\times\N$ generated by the set $1\times M$, i.e.
$$
S_{M} := \bigl\langle (1,m) \::\: m \in M \bigr\rangle \leq \N \times\N .
$$

For a semigroup $S$, we let $SS=\{st\::\: s,t\in S\}$, and call the elements belonging to $S\setminus SS$ \emph{indecomposable}.
Clearly, the indecomposable elements must belong to any generating set of $S$.

\begin{lemma}
\label{lem:irrSM}
The indecomposable elements of $S_M$ are precisely the generators $1\times M$.
\end{lemma}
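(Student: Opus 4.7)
The plan rests on the observation that the semigroup operation on $\N\times\N$ is coordinatewise addition and that every generator of $S_M$ has first coordinate equal to $1$. Consequently every element of $S_M$ has the form $(k,m_1+\cdots+m_k)$ with $k\geq 1$ and $m_1,\dots,m_k\in M$, and the first coordinate $k$ records exactly the number of generators used to form it. I would record this book-keeping fact explicitly at the outset.

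The easy direction is that each generator $(1,m)$ with $m\in M$ is indecomposable: a purported decomposition $(1,m)=(a,b)+(c,d)$ inside $\N\times\N$ would force $a+c=1$ with $a,c\geq 1$, which is impossible. For the converse, I would argue that any element of $S_M$ whose first coordinate equals $1$ must actually be a single generator, since a sum of two or more generators has first coordinate at least $2$. Hence any element $(k,n)\in S_M$ not lying in $1\times M$ satisfies $k\geq 2$. Writing $(k,n)=(1,m_1)+\cdots+(1,m_k)$ and then splitting off one summand exhibits
$$
(k,n)=(1,m_1)+(k-1,m_2+\cdots+m_k),
$$
a product of two elements of $S_M$, since the second summand has first coordinate $k-1\geq 1$ and is therefore in $S_M$.

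There is no substantive obstacle: the lemma is essentially a book-keeping exercise on the first coordinate. The only point worth emphasising is the identification of the elements of $S_M$ with first coordinate equal to $1$ with the generators themselves, which is precisely what rules out any other indecomposable candidates.
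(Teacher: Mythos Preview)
Your argument is correct and is essentially the same as the paper's, just written out in more detail: the paper compresses the whole proof into the single observation that $1$ is indecomposable in $\N$, which is exactly the first-coordinate book-keeping you unpack explicitly. Nothing is missing or different in substance.
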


\begin{proof}
This follows immediately from the fact that $1$ is indecomposable in $\N$.
\end{proof}

Next we record the following criterion for freeness of $S_M$:

\begin{lemma}
\label{nonisomorphicgoodness}
The semigroup $S_M$ is free commutative over $M$ if and only if $|M|\leq 2$.
\end{lemma}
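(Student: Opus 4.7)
The plan is to use the fact that $S_M$, being a subsemigroup of the commutative semigroup $\N\times\N$, is automatically commutative, so the question reduces to whether the canonical surjection from the free commutative semigroup on $M$ onto $S_M$ (sending the generator $m$ to $(1,m)$) is injective. Thus I need to determine when the generators $\{(1,m)\::\:m\in M\}$ satisfy only trivial $\N$-linear relations in $\N\times\N$.

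For the easy direction, assume $|M|\leq 2$. The cases $|M|=0,1$ are immediate (empty or monogenic). For $|M|=2$, write $M=\{m_1,m_2\}$ with $m_1\neq m_2$; a generic element of $S_M$ has the form $(a_1+a_2,\,a_1m_1+a_2m_2)$. If two such expressions agree, comparison of first coordinates gives $a_1-b_1=b_2-a_2$, and substituting into the second coordinate equation gives $(a_1-b_1)(m_1-m_2)=0$, forcing $a_1=b_1$ and $a_2=b_2$. Hence $S_M$ is free commutative on $M$.

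For the harder direction, suppose $|M|\geq 3$ and choose three distinct elements $m_1<m_2<m_3$ in $M$. The plan is to exhibit an explicit nontrivial $\N$-linear relation among $(1,m_1),(1,m_2),(1,m_3)$, which by freeness on any superset would have to be trivial. The candidate relation is
\[
(m_3-m_2)(1,m_1)+(m_2-m_1)(1,m_3)=(m_3-m_1)(1,m_2),
\]
whose two sides both equal $(m_3-m_1,\,m_2(m_3-m_1))$ in $\N\times\N$; direct coordinatewise verification finishes the argument, since the exponent vectors $(m_3-m_2,0,m_2-m_1)$ and $(0,m_3-m_1,0)$ differ.

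The statement is short and the verifications are routine; I expect no real obstacle beyond cleanly separating the two implications and checking the arithmetic of the exhibited relation. The underlying reason the relation exists is that in the $|M|\geq 3$ case the two linear conditions imposed by membership in $S_M$ (fixing coordinate sums and the $m$-weighted sum) cut a one-dimensional affine subspace out of $\N^3$, which necessarily contains lattice points with distinct nonnegative representatives.
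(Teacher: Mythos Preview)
Your proof is correct and follows essentially the same approach as the paper: the $|M|\leq 2$ case is handled by the same linear-algebra calculation, and for $|M|\geq 3$ your exhibited relation $(m_3-m_2)(1,m_1)+(m_2-m_1)(1,m_3)=(m_3-m_1)(1,m_2)$ is precisely the paper's relation $m_2(1,m_1)+m_3(1,m_2)+m_1(1,m_3)=m_3(1,m_1)+m_1(1,m_2)+m_2(1,m_3)$ rearranged so that all coefficients are positive (your ordering $m_1<m_2<m_3$ makes this possible, while the paper's symmetric form avoids any ordering assumption).
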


 \begin{proof}
If $|M|=1$, the semigroup $S_{M}$ is clearly free monogenic. If $M=\{m_1,m_2\}$, suppose the generators satisfy a non-trivial relation
 \begin{equation*}
\alpha_{1}(1,m_{1})+ \alpha_{2}(1,m_{2}) = \beta_{1}(1,m_{1})+ \beta_{2}(1,m_{2})
\end{equation*}
(where $\alpha_{1},\alpha_{2},\beta_{1},\beta_{2} \in \N \cup\{0\}$, not all equal to $0$). 
This gives 
\[
\alpha_{1} + \alpha_{2} = \beta_{1} + \beta_{2}, 
\ 
\alpha_{1}m_{1} + \alpha_{2}m_{2} = \beta_{1}m_{1} + \beta_{2}m_{2}.
\]
Denoting $\gamma_{i} = \alpha_{i} - \beta_{i}$ ($i = 1,2$), it follows that
 $\gamma_{1} = -\gamma_{2}$ and $\gamma_{1}m_{1} + \gamma_{2}m_{2} = 0$. 
Since $m_{1} \neq  m_{2}$, we have  $\gamma_{1} = \gamma_{2} = 0$ and thus the relation is trivial. 
Therefore $S_{M}$ is again free.
Finally, if $|M|\geq 3$, pick any three distinct $m_1,m_2,m_3\in M$, and observe that
\begin{equation}
\label{eq:nontriv}
m_{2}(1,m_{1}) + m_{3}(1,m_{2}) + m_{1}(1,m_{3}) = m_{3}(1,m_{1}) + m_{1}(1,m_{2}) + m_{2}(1,m_{3})
\end{equation} 
is a non-trivial relation.
\end{proof}

In the next lemma we show that sets $M$ of size $3$ already yield 
semigroups $S_{M}$ which are typically pairwise non-isomorphic.

\begin{lemma} \label{conditionfornonisomorphism}
 Let $M = \{m_{1},m_{2},m_{3}\}$,  $N = \{n_{1},n_{2},n_{3}\}$ be two 3-element subsets of $\N $. Then $S_{M}$ and $S_{N}$ are isomorphic, via isomorphism $\varphi : S_{M} \rightarrow S_{N}$ satisfying $\varphi(1,m_{i}) = (1,n_{i})$ $(i = 1,2,3)$, if and only if
\begin{equation}
\label{eq:dag}
n_{2}(m_{3}-m_{1}) = n_{1}(m_{3}-m_{2}) + n_{3}(m_{2}-m_{1}).
\end{equation}
\end{lemma}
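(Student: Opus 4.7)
The plan is to prove the biconditional in the two natural directions, linking them via the nontrivial relation \eqref{eq:nontriv} supplied by Lemma~\ref{nonisomorphicgoodness}.

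The forward direction should be essentially immediate. Assuming $\varphi$ is an isomorphism with $\varphi(1, m_i) = (1, n_i)$, I would apply $\varphi$ to both sides of \eqref{eq:nontriv}, obtain the analogous equality in $S_N$ with the $(1, n_i)$'s replacing the $(1, m_i)$'s, and read off the second coordinate. This should produce $m_2 n_1 + m_3 n_2 + m_1 n_3 = m_3 n_1 + m_1 n_2 + m_2 n_3$, which is \eqref{eq:dag} after a short rearrangement.

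For the converse, assuming \eqref{eq:dag}, I plan to build $\varphi$ explicitly by verifying that the assignment $(1, m_i) \mapsto (1, n_i)$ preserves every relation in $S_M$. The strategy is to translate any relation $\sum \alpha_i(1, m_i) = \sum \beta_i(1, m_i)$ (with $\alpha_i, \beta_i \in \N \cup \{0\}$) into an integer solution of the $2 \times 3$ linear system $\sum \gamma_i = 0$, $\sum \gamma_i m_i = 0$ via $\gamma_i := \alpha_i - \beta_i$, and then prove the analogous relation $\sum \gamma_i n_i = 0$ for the $(1, n_i)$. Since the $m_i$ are distinct the system has rank $2$, so its $\mathbb{Q}$-solution space is one-dimensional, spanned by $(m_2 - m_3, m_3 - m_1, m_1 - m_2)$. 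Hence any such $\gamma$ is a rational multiple of this vector, and $\sum \gamma_i n_i$ becomes the same multiple of $(m_2 - m_3) n_1 + (m_3 - m_1) n_2 + (m_1 - m_2) n_3$, which vanishes by \eqref{eq:dag}. Surjectivity of the resulting $\varphi$ is immediate, since its image contains the generators of $S_N$.

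The final step is injectivity, which I plan to derive from the symmetry of \eqref{eq:dag} under the swap $M \leftrightarrow N$: a brief computation shows that \eqref{eq:dag} and its $M \leftrightarrow N$ analogue are negatives of each other after expansion, hence equivalent. Running the previous converse construction in reverse then yields a homomorphism $S_N \to S_M$ sending $(1, n_i) \mapsto (1, m_i)$, which is inverse to $\varphi$ on generators and therefore throughout. I expect the main obstacle to be the rank-$2$ verification identifying the one-dimensional solution space (which needs the $m_i$ to be distinct); the symmetry check and surjectivity are routine.
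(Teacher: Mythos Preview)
Your proposal is correct and follows essentially the same route as the paper: the forward direction is identical, and for the converse both you and the paper define $\varphi$ on generators and verify well-definedness via the one-dimensional solution space of the rank-$2$ system $\sum\gamma_i=0$, $\sum\gamma_i m_i=0$. The only difference is in handling injectivity: the paper obtains it within the same chain of biconditionals (so well-definedness and injectivity come out together), whereas you deduce it from the $M\leftrightarrow N$ symmetry of \eqref{eq:dag} by constructing the inverse homomorphism---a minor organizational variant of the same idea.
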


\begin{proof}
$\left(\Rightarrow\right)$ 
Suppose that $S_{M} \cong S_{N}$ with $\varphi$ the given isomorphism. 
Applying $\varphi$ to the relation \eqref{eq:nontriv}
among the generators $(1,m_1),(1,m_2,),(1,m_3)$ yields
\[
m_{2}(1,n_{1}) + m_{3}(1,n_{2}) + m_{1}(1,n_{3}) = m_{3}(1,n_{1}) + m_{1}(1,n_{2}) + m_{2}(1,n_{3}),
\]
from which \eqref{eq:dag} readily follows.

$\left(\Leftarrow\right)$ 
Assume that \eqref{eq:dag} holds, and define
$\varphi : S_{M} \rightarrow S_{N}$ by
\begin{equation*}
\varphi(x) := \alpha_{1}(1,n_{1}) + \alpha_{2}(1,n_{2}) + \alpha_{3}(1,n_{3})
\end{equation*}
for $x = \alpha_{1}(1,m_{1}) + \alpha_{2}(1,m_{2}) + \alpha_{3}(1,m_{3}) \in S_{M}$. 
We need to show that $\varphi$ is well defined and an isomorphism.
To this end, suppose $\alpha_i,\beta_i\in\N_0$ ($i=1,2,3$), let $\gamma_i=\alpha_i-\beta_i$,
and observe that:
\begin{eqnarray*}
&& \sum_{i=1}^{3}\alpha_{i}(1,m_{i}) =  \sum_{i=1}^{3}\beta_{i}(1,m_{i}) 
 \\
&\Leftrightarrow & \gamma_{1} + \gamma_{2} + \gamma_{3} = 0 \ \& \ \gamma_{1}m_{1} + \gamma_{2}m_{2} + \gamma_{3}m_{3}  = 0 
\\
&\Leftrightarrow & \gamma_{1} = \gamma_{3} \left(\frac{m_{2}-m_{3}}{m_{1}-m_{2}} \right) \ \& \ \gamma_{2} = \gamma_{3}\left(\frac{m_{3}-m_{1}}{m_{1}-m_{2}} \right)
\\
&\Leftrightarrow & \gamma_{1} + \gamma_{2} + \gamma_{3} = 0 \ \& \  \gamma_{1}n_{1} + \gamma_{2}n_{2} + \gamma_{3}n_{3}  = 0 
\\
&\Leftrightarrow &   \sum_{i=1}^{3}\alpha_{i}(1,n_{i}) =  \sum_{i=1}^{3}\beta_{i}(1,n_{i}) 
\\
&\Leftrightarrow &   \varphi\bigl(\sum_{i=1}^{3}\alpha_{i}(1,m_{i})\bigr) =   \varphi\bigl(\sum_{i=1}^{3}\beta_{i}(1,m_{i})\bigr).  
\end{eqnarray*}
It follows that $\varphi$ is well defined and injective.
That it is a homomorphism and surjective follows directly from definition. Therefore $\varphi$ is an isomorphism between $S_{M}$ and $S_{N}$, as required.
\end{proof}

The above characterisation motivates the introduction of the following property. 
We say that a subset $M \subseteq \N $ is \emph{3-separating} if $|M|\geq 3$ and the following condition is satisfied:
\begin{enumerate}[label=\textsf{(S\arabic*)}, widest=(S1), leftmargin=10mm]
\item
\label{it:S1}
 For any two triples $(m_{1},m_{2},m_{3})$ and $(n_{1},n_{2},n_{3})$ of distinct elements from $M$, we have
$$n_{2}(m_{3}-m_{1}) = n_{1}(m_{3}-m_{2}) + n_{3}(m_{2}-m_{1}) \iff (m_{1},m_{2},m_{3}) = (n_{1},n_{2},n_{3}).$$
\end{enumerate}
For example, the set $M = \{1,2,3\}$ is not 3-separating since the triples $(1,2,3)$ and $(3,2,1)$ violate condition \ref{it:S1}. The set $N = \{1,2,4\}$ is 3-separating, which can be verified by direct computation.

Lemma \ref{conditionfornonisomorphism} opens up a way of producing pairs of 3-generator subsemigroups of $\N \times\N $ for which the obvious correspondence between the generators does not induce an isomorphism. We are now going to build an infinite set of generators such that any two distinct subsemigroups generated by 3 elements from the set are actually non-isomorphic. We thus seek to extend our finite examples to an infinite 3-separating set, and we do this inductively. To make the induction work, we introduce an additional condition, and say that a set $M\subseteq \N $ is \emph{strongly 3-separating}, if it is 3-separating and the following condition holds:

\begin{enumerate}[label=\textsf{(S\arabic*)}, widest=(S2), leftmargin=10mm]
\setcounter{enumi}{1}
 \item
\label{it:S2}
For any two pairs $(m_{1},m_{2})$, $(n_{1},n_{2})$ of distinct elements of $M$:$$m_{1} -m_{2} + n_{2} - n_{1} = 0 \iff (m_{1},m_{2}) = (n_{1},n_{2}).$$ 
\end{enumerate}

Noting that subsets of strongly 3-separating sets are again necessarily strongly 3-separating, we show in the next lemma that we can extend a finite strongly 3-separating set to a larger one, whilst maintaining this property.

\begin{lemma}\label{buildingsets}
If $M$ is a strongly 3-separating finite set, then there exists $x \in \N \setminus M$ such that $M\cup\{x\}$ is also strongly 3-separating.
\end{lemma}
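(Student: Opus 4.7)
My plan is to show that only finitely many positive integers $x$ can fail to extend $M$ to a strongly 3-separating set; since $\N$ is infinite, this will suffice. Because $M$ is already strongly 3-separating, any failure must arise from a pair or triple that involves the new element $x$, so it is enough to examine those configurations.

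For property \ref{it:S2}, enumerating the ordered-pair configurations involving $x$ (namely $(x,m)$ against $(m',m'')$, $(m,x)$ against $(m',m'')$, and $(x,m)$ against $(m',x)$, for $m,m',m''\in M$) yields a finite collection of linear equations in $x$, each with at most one solution, of the shape $x = m + m' - m''$ or $2x = m + m'$. Pairs of the form $(x,m)$ and $(x,m')$ (or $(m,x)$ and $(m',x)$) give no bad $x$ at all since they immediately force $m=m'$.

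For property \ref{it:S1}, I first rewrite the defining equation in the symmetric form
$$m_1(n_3-n_2)+m_2(n_1-n_3)+m_3(n_2-n_1)=0.$$
If $x$ appears in exactly one of the two triples (say $m_i=x$, with the opposite triple lying in $M$), then this is linear in $x$ with coefficient a difference of two distinct elements of the $n$-triple, hence non-zero, so there is at most one solution. If $x = m_i = n_j$ with $i\neq j$, the cross term $m_i n_j = x^2$ appears in the expansion with coefficient $\pm 1$ (and no other source of $x^2$ exists, as $m$-elements and $n$-elements are each pairwise distinct), so the equation is a genuine quadratic with leading coefficient $\pm 1$, admitting at most two solutions.

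The delicate sub-case is $x = m_i = n_i$, where the equation is linear in $x$ but its coefficient could in principle vanish. A short direct computation shows that this vanishing reduces to an identity of the form $n_k-n_\ell = m_k-m_\ell$ among elements of $M$; by property \ref{it:S2} for $M$, this forces $(n_k,n_\ell)=(m_k,m_\ell)$, so the two triples in fact coincide and no genuine violation occurs. Altogether only finitely many $x\in\N$ are bad, and any sufficiently large $x\in\N\setminus M$ gives the required extension. The main obstacle is the careful bookkeeping of all these cases, and in particular verifying that each scenario in which the polynomial in $x$ becomes identically trivial is absorbed by the inductive hypothesis that $M$ already satisfies \ref{it:S1} and \ref{it:S2}.
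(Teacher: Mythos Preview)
Your proof is correct and follows essentially the same approach as the paper: both argue that only finitely many $x$ can cause a failure of \ref{it:S1} or \ref{it:S2}, via the same case split on how many coordinates equal $x$ and whether they occupy matching positions, using \ref{it:S2} for $M$ to handle the degenerate linear coefficient in the $m_i=n_i=x$ subcase. Your identification of the linear coefficient in the single-$x$ subcase as a difference of two distinct entries of the opposite triple is in fact more precise than the paper's own statement.
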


\begin{proof}
We consider all the situations where adding an element $x$ to $M$ 
yields a set that is not strongly 3-separating, and prove that there are only finitely many such $x$.

\textit{Case 1: $M \cup \{ x \}$ violates condition \textup{\ref{it:S2}}.} 
This means that there exist two pairs 
$(m_{1},m_{2})\neq (n_{1},n_{2})$
of distinct elements of $M \cup \{ x \}$ 
such that
\begin{equation}
\label{eq:vS2}
m_{1} - m_{2} + n_{2} - n_{1} = 0.
\end{equation}
It follows that at most two of $m_i,n_i$ can equal $x$, and that we cannot have $m_i=n_i=x$.
Thus, if we regard \eqref{eq:vS2} as an equation in $x$, it is linear  and the coefficient of $x$ is non-zero. Thus, for any choice of the $m_i,n_i$ from $M$, of which there are only finitely many, there exists at most one $x$ such that \eqref{eq:vS2} holds.

\textit{Case 2:  $M \cup \{ x \}$ violates condition \textup{\ref{it:S1}}.} 
Now there exist two triples $(m_1,m_2,m_3)\neq(n_1,n_2,n_3)$ of distinct elements of $M\cup\{x\}$
such that
\begin{equation}
\label{eqaaa}
n_{2}(m_{3}-m_{1}) = n_{1}(m_{3}-m_{2}) + n_{3}(m_{2}-m_{1}) .
\end{equation} 
Again, at most one $m_i$ and at most one $n_j$ can equal $x$.

\noindent \textit{Subcase 2.1: Precisely one of $m_1,m_2,m_3,n_1,n_2,n_3$ equals $x$.}
The equation \eqref{eqaaa} is linear in $x$, with the $x$-coefficient $\pm 1$.
Thus, given the five $m_j,n_k\in M$, there is at most one value of $x$ such that
\eqref{eqaaa} holds.

\textit{Subcase 2.2: $x = m_{i} = n_{j}$ for some distinct $i,j \in\{ 1,2,3\}$.}
This time the equation \eqref{eqaaa} is quadratic in $x$, and so there are at most two solutions for $x$ in terms of the remaining four variables $m_{k}, n_{l}\in M$.

\textit{Subcase 2.3: $x = m_{i} = n_{i}$ for some $i = 1,2,3$.}
This time the equation \eqref{eqaaa} is back to linear in $x$, and the coefficient of $x$ has the form $m_{k}-m_{l} + n_{l} - n_{k}$ for $\{k,l\}=  \{1,2,3\}\setminus\{i\}$. 
This coefficient is non-zero because $M$ is assumed to be strongly 3-separating, and in particular it satisfies the condition \ref{it:S2}. So, yet again, there is at most one value of $x$
such that \eqref{eqaaa} holds.
\end{proof}

Iterating Lemma \ref{buildingsets}, and taking the limit we have:

\begin{cor}\label{infinitethreesep}
There exists an infinite strongly \emph{3}-separating set $M_{\infty}$ with $1\in M_\infty$.
\end{cor}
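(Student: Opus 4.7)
The plan is to apply Lemma \ref{buildingsets} inductively to build an ascending chain of finite strongly 3-separating sets, and then to take their union. The role of the ``strongly'' in strongly 3-separating is precisely to make this induction go through: it provides the extra clause, condition \ref{it:S2}, that Lemma \ref{buildingsets} needed in its Subcase 2.3, while the property remains preserved under passage to subsets.

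First I would exhibit a concrete finite strongly 3-separating set containing $1$, to serve as the base case; for instance, a direct check shows that $M_{0}:=\{1,2,4\}$ satisfies both \ref{it:S1} and \ref{it:S2}. (The paper has already noted \ref{it:S1} for this set; condition \ref{it:S2} involves only finitely many ordered pairs of distinct elements and is verified by inspection.) Having fixed $M_{0}$, I would then recursively define $M_{n+1}:=M_{n}\cup\{x_{n}\}$, where $x_{n}\in\N\setminus M_{n}$ is supplied by Lemma \ref{buildingsets} so that $M_{n+1}$ is again (finite and) strongly 3-separating. This produces a strictly ascending chain
\[
M_{0}\subsetneq M_{1}\subsetneq M_{2}\subsetneq\cdots
\]
of finite strongly 3-separating sets of natural numbers, all containing $1$.

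I would then set $M_{\infty}:=\bigcup_{n\geq 0}M_{n}$. This set is obviously infinite and contains $1$, so it only remains to verify that $M_{\infty}$ is strongly 3-separating. The essential observation is that both conditions \ref{it:S1} and \ref{it:S2} are finitary: each mentions only two or three elements of $M_{\infty}$ at a time. Hence any purported violation of \ref{it:S1} or \ref{it:S2} in $M_{\infty}$ would involve at most six elements, all of which would lie in some single $M_{n}$ (since the chain is ascending), contradicting the fact that $M_{n}$ is strongly 3-separating. Thus $M_{\infty}$ inherits both conditions, and also $|M_{\infty}|\geq 3$, so it is strongly 3-separating.

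There is no real obstacle here beyond the base case: the inductive step is entirely delegated to Lemma \ref{buildingsets}, and the limit argument is the standard one for finitary closure conditions. The only item that requires any calculation is confirming that the starter set $\{1,2,4\}$ satisfies \ref{it:S2}, and this is a short finite check.
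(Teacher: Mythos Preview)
Your proposal is correct and follows essentially the same approach as the paper: start from the strongly 3-separating set $\{1,2,4\}$, extend it one element at a time via Lemma~\ref{buildingsets} to obtain a strictly ascending chain of finite strongly 3-separating sets, and take the union, noting that any failure of \ref{it:S1} or \ref{it:S2} in the union would be witnessed by finitely many elements and hence already inside some term of the chain. Your write-up is slightly more explicit about the finitary nature of the conditions and about verifying \ref{it:S2} for the base set, but the argument is the same.
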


\begin{proof}
Using Lemma \ref{buildingsets}, starting from any finite strongly 3-separating set $M_{1}$
containing $1$ (such as $\{1,2,4\}$), we can build an infinite strictly ascending chain $M_{1} \subset M_{2} \subset M_{3} \subset \dots$ of finite strongly 3-separating sets. Let 
$M_{\infty} := \bigcup_{i \in \N } M_{i}$, and we claim that
$M_{\infty}$ is strongly 3-separating. 
Indeed, if it were not, this would be witnessed by a finite collection of elements (two triples or two pairs), which would be contained in a single $M_i$, and thereby violate the assumption that $M_i$ is strongly 3-separating.
\end{proof}

We can now prove our first main result.

\begin{thm}
There are uncountably many pairwise non-isomorphic subsemigroups of $\N  \times \N $.
\end{thm}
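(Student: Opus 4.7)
The plan is to exhibit $2^{\aleph_0}$ pairwise non-isomorphic subsemigroups among the family $\{S_M : M \subseteq M_{\infty},\ |M|\geq 3\}$, where $M_{\infty}$ is the infinite strongly 3-separating set supplied by Corollary \ref{infinitethreesep}. Since $M_{\infty}$ is countably infinite, this family has cardinality $2^{\aleph_0}$, so it suffices to prove that distinct subsets $M,N \subseteq M_{\infty}$ of size at least $3$ yield non-isomorphic $S_M$ and $S_N$.

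To show non-isomorphism, I would start from a hypothetical isomorphism $\varphi : S_M \to S_N$ and first invoke Lemma \ref{lem:irrSM}: isomorphisms preserve the set of indecomposable elements, so $\varphi$ must send $1 \times M$ bijectively onto $1 \times N$. This produces a bijection $\sigma : M \to N$ with $\varphi(1,m) = (1,\sigma(m))$, and in particular forces $|M|=|N|$.

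Next, for any choice of three distinct elements $m_1,m_2,m_3 \in M$, I would restrict $\varphi$ to the subsemigroup $S_{\{m_1,m_2,m_3\}}$ of $S_M$. Because $\varphi$ is an injective homomorphism sending the three generators $(1,m_i)$ to $(1,\sigma(m_i))$, this restriction is an isomorphism onto $S_{\{\sigma(m_1),\sigma(m_2),\sigma(m_3)\}}$ of precisely the form to which Lemma \ref{conditionfornonisomorphism} applies. That lemma then forces
\[
\sigma(m_2)(m_3-m_1) = \sigma(m_1)(m_3-m_2) + \sigma(m_3)(m_2-m_1).
\]
Since $(m_1,m_2,m_3)$ and $(\sigma(m_1),\sigma(m_2),\sigma(m_3))$ are two triples of distinct elements of $M_{\infty}$, condition \ref{it:S1} yields $(\sigma(m_1),\sigma(m_2),\sigma(m_3)) = (m_1,m_2,m_3)$. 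As $|M|\geq 3$, every element of $M$ lies in some such triple, so $\sigma = \mathrm{id}_M$ and hence $N = M$.

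The substantive work has already been absorbed by the preceding lemmas and by Corollary \ref{infinitethreesep}, so I do not expect a serious obstacle; the remaining task is a clean combinatorial assembly. The one bookkeeping point to verify carefully is that an isomorphism $S_M \to S_N$ really does restrict to an isomorphism between the relevant $3$-generator subsemigroups, which is immediate from the fact that $\varphi$ is an injective homomorphism carrying the distinguished generating sets to each other.
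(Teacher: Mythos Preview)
Your proposal is correct and follows essentially the same approach as the paper: exhibit the family $\{S_M : M\subseteq M_\infty,\ |M|\geq 3\}$, use Lemma~\ref{lem:irrSM} to force any isomorphism to match generators, restrict to three-element subsets, apply Lemma~\ref{conditionfornonisomorphism}, and invoke condition~\ref{it:S1}. The only cosmetic difference is that the paper picks a single $m_1\in M\setminus N$ and derives a contradiction, whereas you show directly that the induced bijection $\sigma$ is the identity; these are equivalent packagings of the same argument.
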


\begin{proof}
Let $M_{\infty}$ be an infinite 3-separating set, whose existence was established in Corollary \ref{infinitethreesep}. Consider the collection of semigroups defined by
$$\mathcal{C} := \bigl\{S_{M} \::\: M \subseteq M_{\infty},\ |M|\geq 3\bigr\}.$$

We claim that the semigroups in $\mathcal{C}$ are pairwise non-isomorphic. Suppose to the contrary that two of these semigroups $S_{M}$ and $S_{N}$ ($M\neq N$) are isomorphic, and let $\varphi$ be an isomorphism between them. 
Without loss of generality assume that $M \setminus N\neq\emptyset$. 
Choose $m_{1} \in M\setminus N$, and let $m_{2},m_{3}$ 
be two further distinct elements of $M$. 

By Lemma \ref{lem:irrSM}, the elements $(1,m_{i})$ are indecomposable in $S_{M}$, and hence their images $\varphi(1,m_{i})$  must be indecomposable in $S_{N}$. 
Again by Lemma \ref{lem:irrSM}, the indecomposables of $S_{N}$ are all of the form $(1,n)$ for $n \in N$, and so there must exist distinct $n_{1},n_{2},n_{3} \in N$ such that $\varphi(1,m_{i}) = (1,n_{i})$ for $i = 1,2,3$. 
It now follows that the subsemigroups 
$\bigl\langle (1,m_{1}), (1,m_{2}), (1,m_{3}) \bigr\rangle\leq S_{M}$ and $\bigl\langle (1,n_{1}), (1,n_{2}), (1,n_{3})\bigr\rangle\leq S_{N}$ are isomorphic via the restriction of $\varphi$.
Now Lemma \ref{conditionfornonisomorphism}, implies that \begin{equation*}n_{2}(m_{3}-m_{1}) = n_{1}(m_{3}-m_{2}) + n_{3}(m_{2}-m_{1}),\end{equation*} which in turn implies that $(m_{1},m_{2},m_{3}) = (n_{1},n_{2},n_{3})$ because $M_{\infty}$
is (strongly) 3-separating. It now follows that $m_1=n_1\in N$, a contradiction with the choice of $m_1$.

This proves that $S_M\not\cong S_N$, and hence $\mathcal{C}$ is indeed an uncountable collection of pairwise non-isomorphic subsemigroups of $\N \times\N $.
\end{proof}

Considering $\N $ as an infinite monogenic subsemigroup, we obtain:

\begin{impcor}If $S$ and $T$ are semigroups containing elements of infinite order, then $S\times T$ contains uncountably many pairwise non-isomorphic subsemigroups. 
\end{impcor}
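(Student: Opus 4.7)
The plan is to embed a copy of $\N \times \N$ into $S \times T$ and then transfer the uncountable family produced by Theorem \ref{thmA} across this embedding.

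First, by hypothesis I would choose elements $s \in S$ and $t \in T$ of infinite order. An element of a semigroup has infinite order precisely when its positive powers are pairwise distinct, so the assignments $n \mapsto s^n$ and $n \mapsto t^n$ define injective homomorphisms $\N \to S$ and $\N \to T$ whose images are the monogenic subsemigroups $\langle s \rangle$ and $\langle t \rangle$. Taking the direct product of these two embeddings yields an injective homomorphism $\N \times \N \to S \times T$ identifying $\N \times \N$ with the subsemigroup $\langle s \rangle \times \langle t \rangle \leq S \times T$.

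Next, Theorem \ref{thmA} supplies an uncountable family $\{U_\alpha\}$ of pairwise non-isomorphic subsemigroups of $\N \times \N$. I would transport this family along the embedding above to obtain an uncountable family $\{V_\alpha\}$ of subsemigroups of $S \times T$, each abstractly isomorphic to its corresponding $U_\alpha$. Because isomorphism of subsemigroups is both preserved and reflected by any injective homomorphism, $V_\alpha \cong V_\beta$ would force $U_\alpha \cong U_\beta$; hence the $V_\alpha$ are pairwise non-isomorphic.

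There is no real obstacle here: the argument is purely a transfer along an embedding, and the only point worth checking is that an element of infinite order generates a subsemigroup isomorphic to the free monogenic semigroup $\N$, which is immediate from the definition.
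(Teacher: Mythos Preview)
Your proposal is correct and follows exactly the same approach as the paper: embed $\N\times\N$ into $S\times T$ via the monogenic subsemigroups generated by elements of infinite order, and then invoke Theorem~\ref{thmA}. The paper's version is simply terser, omitting the explicit verification that the embedding preserves and reflects isomorphism of subsemigroups.
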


\begin{proof} If $S$ and $T$ contain elements of infinite order, then they each contain a subsemigroup isomorphic to $\N $. Hence $S\times T$ contains a subsemigroup isomorphic to $\N \times \N $, which contains uncountably many non-isomorphic subsemigroups by Theorem \ref{thmA}. \end{proof}

Our next theorem deals with subdirect products of $\N^k$.

\begin{thm}
For any $k \geq 2$, the direct power $\N ^{k}$ contains uncountably many pairwise non-isomorphic subdirect products.
\end{thm}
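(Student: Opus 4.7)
The plan is to reuse the family $\{S_M\}$ constructed in the proof of Theorem~\ref{thmA}, restricted so that the second projection onto $\N$ is surjective, and then lift everything into $\N^k$ via a simple embedding.

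For the case $k=2$: By Corollary~\ref{infinitethreesep} there is an infinite strongly 3-separating set $M_\infty$ containing $1$. I consider the subfamily
\[
\mathcal{C}^{\ast} := \bigl\{ S_M \::\: M \subseteq M_\infty,\ 1 \in M,\ |M| \geq 3 \bigr\}.
\]
This is in bijection with the uncountable collection of subsets of $M_\infty \setminus \{1\}$ of cardinality at least $2$, so $\mathcal{C}^\ast$ is uncountable. Every $S_M \in \mathcal{C}^\ast$ is a subdirect product of $\N \times \N$: the first projection is all of $\N$ because every generator has first coordinate $1$, while the second projection contains $1 \in M$ and hence also equals $\N$. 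Pairwise non-isomorphism of the members of $\mathcal{C}^\ast$ is inherited directly from the proof of Theorem~\ref{thmA}, which is local to any chosen triple of generators and invokes only Lemmas~\ref{lem:irrSM} and~\ref{conditionfornonisomorphism} together with the strong 3-separating property of $M_\infty$; none of these steps is disturbed by the additional constraint $1 \in M$.

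For general $k \geq 2$, I would introduce the map $\phi_k : \N \times \N \to \N^k$ defined by $\phi_k(a,b) := (a, b, a, a, \dots, a)$. A direct verification shows $\phi_k$ is an injective semigroup homomorphism, so $\phi_k(S_M) \cong S_M$. Moreover, $\phi_k(S_M)$ is a subdirect product of $\N^k$: coordinates $1$ and $3, \dots, k$ each realise the first projection of $S_M$, which is $\N$, while coordinate $2$ realises the second projection, which is also $\N$ by the previous paragraph. Consequently $\{\phi_k(S_M) \::\: S_M \in \mathcal{C}^\ast\}$ is an uncountable family of pairwise non-isomorphic subdirect products of $\N^k$, as required.

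I foresee no serious obstacle; the only point warranting explicit check is that the non-isomorphism argument from Theorem~\ref{thmA} continues to discriminate within $\mathcal{C}^\ast$ after we impose $1 \in M$, which it does because that argument never singles out any particular element of $M$ beyond the chosen triple $m_1, m_2, m_3$.
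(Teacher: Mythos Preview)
Your proposal is correct and follows essentially the same approach as the paper. The paper likewise restricts to subsets $M\subseteq M_\infty$ containing $1$ and embeds $S_M$ into $\N^k$ via $(n,p)\mapsto(n,\dots,n,p)$, observing that the image contains the diagonal and is therefore subdirect; your embedding $(a,b)\mapsto(a,b,a,\dots,a)$ differs only by a permutation of coordinates.
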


\begin{proof}
Let $M_{\infty}$ be an infinite strongly 3-separating set such that $1 \in M_{\infty}$,
guaranteed by Corollary \ref{infinitethreesep}. 
For a subset $M \subseteq M_{\infty}$ containing $1$, define the subsemigroup
\begin{equation*} 
T_{M}:= \bigl\langle(1,\hdots,1, m) \::\: m \in M \bigr\rangle \leq \N ^{k}.
\end{equation*}
Then $T_{M}$ is a subdirect product, as $T_{M}$ contains the diagonal subsemigroup $\bigl\{(n,\dots,n)\::\: n\in\N\bigr\}\leq\N ^{k}$. 
Note that $T_{M} \cong S_{M}$, via isomorphism $\varphi(n,\hdots,n,p) = (n,p)$. The result now follows from Theorem \ref{thmA}.
\end{proof}

\section{Subsemigroups of $\N \times S$ with $S$ finite}
\label{sec3}

In light of Theorem \ref{thmA}, one may ask which directly decomposable semigroups containing $\N $ as a component have only countably many subsemigroups up to isomorphism. 
We have seen that this number is uncountable for $\N\times\N$, while it is trivially finite for $S\times T$ with both $S$ and $T$ finite.
A natural question would be to ask if every finite semigroup $S$ has the property that 
$\N \times S$ contains only countably many subsemigroups. 
We begin by showing that this is at least true for $S$ a finite group.

\begin{lemma}
\label{NxG}
If $G$ is a finite group then every subsemigroup of $\N \times G$ is finitely generated; hence $\N \times G$ has only countably many subsemigroups. 
\end{lemma}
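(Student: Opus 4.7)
The plan is to prove the stronger statement: every subsemigroup $T \leq \N \times G$ is finitely generated. The count then follows immediately, since $\N \times G$ is countable, so only countably many finite subsets are available as generating sets. The first step is to project $T$ onto the second coordinate; the image $H := \pi_2(T)$ is a finite subsemigroup of the group $G$, hence itself a subgroup. For any $(m, g) \in T$, the power $(m, g)^{|H|}$ equals $(|H|m, e)$ and lies in $T$, so the set $U := \{n \in \N : (n, e) \in T\}$ is a non-empty subsemigroup of $\N$. By the structure theorem for subsemigroups of $\N$ cited in the introduction, $U = A \cup \{nd : n \geq n_0\}$ for a finite set $A$ and integers $d, n_0$, and in particular is finitely generated; let $Y$ be such a finite generating set.

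The main step is to handle the other fibres $U_g := \{m \in \N : (m, g) \in T\}$ for $g \in H$. Each $U_g$ is closed under addition by $U$, so I would analyse it one residue class modulo $d$ at a time: within each class $r$ meeting $U_g$, pick the minimum element $m_{g,r}$; then every element of $U_g$ in that class has the form $m_{g,r} + kd$ for some $k \geq 0$, and for $k \geq n_0$ one has $kd \in U$ and $(m_{g,r} + kd, g) = (m_{g,r}, g)(kd, e)$. The remaining small elements (those with $0 < k < n_0$) contribute only finitely many further generators per $g$ and per residue class. Combining everything, a finite generating set for $T$ is given by $Y \times \{e\}$, the base elements $(m_{g,r}, g)$, and the bounded number of exceptional small elements.

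The argument is conceptually straightforward; the main obstacle is simply the careful book-keeping. The presence of the non-periodic finite part $A$ of $U$ (rather than a pure arithmetic progression) is what forces the inclusion of the exceptional small elements in addition to the minima $m_{g,r}$, and one needs to verify directly that the three families of generators actually express every element of $T$. All the finiteness conclusions hold because $|H|$, $d$, and $n_0$ are finite.
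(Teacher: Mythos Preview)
Your proof is correct but takes a different route from the paper's. You slice the subsemigroup by the \emph{second} coordinate, working with the fibres $U_g = \{m : (m,g) \in T\}$, and then invoke the structure theorem for subsemigroups of $\N$ on $U = U_e$ to extract a period $d$ and threshold $n_0$; the residue-class analysis and the finite list of exceptional small elements flow from that. The paper instead slices by the \emph{first} coordinate: setting $G_k := \{g \in G : (k,g) \in U\}$ and picking any $(m, 1_G) \in U$, multiplication by $(m,1_G)$ gives $G_k \subseteq G_{k+m}$; these ascending chains inside the finite set $G$ stabilise, so the sequence $(G_k)$ is eventually periodic with period $m$, and $\bigcup_{k < t_0 + m}\{k\}\times G_k$ is already a finite generating set, verified by a short induction on the first coordinate. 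The paper's argument is shorter and entirely self-contained, never appealing to the numerical-semigroup structure theorem; your argument needs that extra input and more bookkeeping, but it makes the arithmetic of $\N$ more visible and is the natural approach if one thinks of $T$ as a $G$-indexed family of subsets of $\N$.
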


\begin{proof}
Suppose $U \leq \N  \times G$. Since $G$ is finite, there exists $m \in \N $ such that 
$(m,1_{G}) \in U$. 
For every $k \in \N $ define the set
\begin{equation*}
G_{k} := \bigl\{ g \in G \::\: (k,g) \in U\bigr\} \subseteq G.
\end{equation*}

Note that
for $g \in G_{k}$ we have $(k,g) \in U$, and hence 
$(k+m,g)=(k,g)(m,1_{G})  \in U$, implying $g \in G_{k+m}$. 
Hence $G_{k} \subseteq G_{k+m}$ for all  $k \in \N$, and we 
have a chain
\begin{equation*}
G_{k} \subseteq G_{k+m} \subseteq G_{k+2m} \subseteq \hdots .
\end{equation*}

Since $G$ is finite, this chain must eventually stabilise.
It then follows that the sequence $(G_n)_{n\in\N}$ is eventually periodic with period $m$, i.e. 
there exists $t_0\in \N$ such that $G_{t} = G_{t+m}$ for all $t \geq t_{0}$.

 We claim that
 \begin{equation*}
U=\langle X\rangle \text{ where } X: = \bigcup_{1\leq k< t_{0}+m} \bigl(\{k\}\times G_{k}\bigr). 
\end{equation*}
Clearly $\langle X\rangle\subseteq U$, and we just need to show that an arbitrary
$(q,g)\in U$ belongs to $\langle X\rangle$.
We do this by induction on $q$.
For $q< t_0+m$ we have $(q,g)\in X$ and there is nothing to prove.
Suppose now $q\geq t_0+m$.
From $g\in G_q=G_{q-m}$ we have $(q-m,g)\in U$. By induction,
$(q-m,g)\in\langle X\rangle$, and hence
$(q,g)=(q-m,g)(m,1_G)\in \langle X\rangle$, as required.

This proves that an arbitrary subsemigroup of $\N\times G$ is finitely generated.
As there are only countably many finite subsets of the set $\N \times G$, the second assertion follows.
\end{proof}

 Lemma \ref{NxG} is the key observation needed for our next main result, which completely characterises
the direct products $\N\times S$, $S$ finite, with countably many subsemigroups.
In the proof we will make use of \emph{Green's relations} on $S$, which we briefly review;
for a more systematic introduction we refer the reader to \cite{howiefund}. 

Let $1$ be an identity element not belonging to $S$, and let $S^1:=S\cup\{1\}$.
We define three pre-orders and three associated equivalence relations as follows:
\begin{align*}
& s\leq_\R t \Leftrightarrow (\exists u\in S^1)(s=tu), && s\R t\Leftrightarrow s\leq_\R t\ \&\ t\leq_\R s,\\
& s\leq_\L t \Leftrightarrow (\exists u\in S^1)(s=ut), && s\L t\Leftrightarrow s\leq_\L t\ \&\ t\leq_\L s,\\
& s\leq_\J t \Leftrightarrow (\exists u,v\in S^1)(s=utv), && s\J t\Leftrightarrow s\leq_\J t\ \&\ t\leq_\J s.
\end{align*}
Further, we let $\H:=\R\cap\L$. 
If $S$ is finite then $\J=\R\circ\L=\L\circ R=\R\vee\L$ (the composition and join of binary relations).
The maximal subgroups of $S$ are precisely the $\H$-classes of idempotents.
If $S$ is finite and $H$ is a non-group $\H$-class then $h^2<_\J h$ for every $h\in H$.
Thus, a semigroup $S$ is a union of groups (also known as a completely regular semigroup;
see \cite[Section 4.1]{howiefund})
if and only if every $\H$-class contains an idempotent.

\begin{thm} The following are equivalent for a finite semigroup $S$:
\begin{enumerate}[label=\textup{(\roman*)}, widest=(iii), leftmargin=10mm]
\item 
$\N \times S$ has only countably many subsemigroups;
\item 
$\N \times S$ has only countably many pairwise non-isomorphic subsemigroups;
\item 
$S$ is a union of groups.
\end{enumerate}
\end{thm}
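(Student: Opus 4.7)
The plan is to prove the cycle (iii) $\Rightarrow$ (i) $\Rightarrow$ (ii) $\Rightarrow$ (iii), with the middle implication (i) $\Rightarrow$ (ii) being immediate, since the isomorphism classes of a countable family of subsemigroups are at most countable.

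For (iii) $\Rightarrow$ (i), the goal is to generalise Lemma \ref{NxG} and show that every $U\leq\N\times S$ is finitely generated; this then bounds the number of subsemigroups by the countably many finite subsets of $\N\times S$. Let $T:=\pi_S(U)$, which is finite. Since $S$ is a union of groups, each $t\in T$ has an idempotent $e_t$ in its $\H$-class with $te_t=e_tt=t$, and as $t^m=e_t$ for some $m$, one gets $e_t\in T$. Put $N_s:=\{k\in\N:(k,s)\in U\}$ for $s\in T$. Then $N_{e_t}$ is a subsemigroup of $\N$, hence finitely generated, and $N_{e_t}+N_t\subseteq N_t$. The main structural step is to exhibit a finite $F_t\subseteq N_t$ with $N_t=F_t+(N_{e_t}\cup\{0\})$: the structure theorem for subsemigroups of $\N$ recalled in the introduction gives $N_{e_t}\supseteq\{nd:n\geq n_0\}$, which forces every residue class modulo $d$ met by $N_t$ to be eventually filled; the elements of $N_t$ which are minimal under the preorder $l\preceq l'\iff l'-l\in N_{e_t}\cup\{0\}$ are therefore bounded in each residue class, and one takes $F_t$ to be this finite set of minimals. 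A finite generating set for $U$ is then obtained by pairing, for each $t\in T$, finite generators of $N_{e_t}$ with $e_t$, and pairing $F_t$ with $t$. This structural claim is the part I expect to be the main obstacle: Lemma \ref{NxG} side-steps it in the group case by using the single witness $(m,1_G)$, whereas here the local identities $e_t$ vary with $t$, and the finite bases $F_t$ have to be extracted from the arithmetic tail of $N_{e_t}$.

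For (ii) $\Rightarrow$ (iii) I argue contrapositively, in the spirit of Section 2. If $S$ is not a union of groups, some $\H$-class of $S$ is not a group, so (since $S$ is finite) there is $h\in S$ whose monogenic subsemigroup $\langle h\rangle$ has index at least $2$; equivalently, $h^k\neq h$ for every $k\geq 2$. For each $A\subseteq M_\infty$ with $|A|\geq 3$, where $M_\infty$ is the strongly 3-separating set of Corollary \ref{infinitethreesep}, define
\[
U_A:=\bigl\langle(a,h)\::\:a\in A\bigr\rangle\leq\N\times S.
\]
Just as in Lemma \ref{lem:irrSM}, the indecomposables of $U_A$ are precisely $\{(a,h):a\in A\}$: any decomposition $(a,h)=(x,h^i)(y,h^j)$ would force $h^{i+j}=h$ with $i+j\geq 2$, contradicting the choice of $h$. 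Any isomorphism $\varphi:U_A\to U_B$ therefore induces a bijection $\sigma:A\to B$ with $\varphi(a,h)=(\sigma(a),h)$. The analogue of the relation \eqref{eq:nontriv},
\[
a_2(a_1,h)+a_3(a_2,h)+a_1(a_3,h)=a_3(a_1,h)+a_1(a_2,h)+a_2(a_3,h),
\]
holds for any three distinct $a_1,a_2,a_3\in A$, both sides evaluating to $(a_1a_2+a_2a_3+a_3a_1,\,h^{a_1+a_2+a_3})$. Applying $\varphi$ and equating first coordinates (the second coordinates agree on the nose) yields precisely \eqref{eq:dag} with $m_i=a_i$ and $n_i=\sigma(a_i)$; the strong 3-separating property of $M_\infty$ then forces $(a_1,a_2,a_3)=(\sigma(a_1),\sigma(a_2),\sigma(a_3))$, and the standard argument of the proof of Theorem \ref{thmA}, applied to any $a_1\in A\setminus B$, produces a contradiction. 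Hence the family $\{U_A\}$ is uncountable and pairwise non-isomorphic, completing the cycle.
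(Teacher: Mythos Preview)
Your proof is correct, and both nontrivial implications take genuinely different routes from the paper.

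For (iii)$\Rightarrow$(i), the paper simply observes that $\N\times S=\bigcup_{x}(\N\times H_x)$ is a finite union of \emph{subsemigroups} of the form $\N\times(\text{finite group})$; any $U\leq\N\times S$ is then determined by the tuple $\bigl(U\cap(\N\times H_x)\bigr)_x$, and Lemma~\ref{NxG} gives countably many choices for each coordinate. This sidesteps your explicit analysis of the sets $N_t$ and the extraction of the finite bases $F_t$; your argument is more hands-on but has the merit of proving finite generation of $U$ directly rather than as a by-product. For (ii)$\Rightarrow$(iii), the paper builds, for each $M\subseteq\N\setminus\{1\}$, the semigroup $\langle(1,x^k),(m,x):m\in M\rangle$, where $x$ lies in a non-group $\H$-class and $x^{2k}=x^k$; the single relation $(1,x^k)^{mk}=(m,x)^k$ then forces any isomorphism to fix $(1,x^k)$ and hence to preserve~$M$. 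This is entirely self-contained and uses no $3$-separating machinery. Your approach via $U_A=\langle(a,h):a\in A\rangle$ for $A\subseteq M_\infty$ is a neat reuse of Section~\ref{sec2}, making the parallel with Theorem~\ref{thmA} explicit, at the cost of importing Corollary~\ref{infinitethreesep}; the paper's argument buys independence from that earlier construction.
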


\begin{proof}
The implication \ref{it:D1}$\Rightarrow$\ref{it:D2} is immediate.

\ref{it:D2}$\Rightarrow$\ref{it:D3} 
We prove the contrapositive: if $S$ is not a union of groups then $\N \times S$ has uncountably many pairwise non-isomorphic subsemigroups. Note that $S$ not being a union of groups means that there exists a non-group $\mathcal{H}$-class $H$ of $S$.
Let $x \in H$. 
From finiteness of $S$ we have $x^{2k} = x^{k}$ for some $k \in \mathbb{N}$. 
Since $H$ is non-group we have $x^2<_\J x$, and,
more generally, $x^i<_\J x$, so that we must have $k>1$, and there can be no
$y \in S$ such that $yx^{k} = x$ or $x^{k}y = x$. 

For any $M \subseteq \N \setminus\{1\}$ define the semigroup
\begin{equation*}
S_{M} := \bigl\langle(1,x^{k}), (m,x): m \in M \bigr\rangle \leq \N \times S.
\end{equation*}
Since $1$ is indecomposable in $\N$ and $x$ is indecomposable in $\langle x\rangle\leq S$, it follows that 
all the generators of $S_M$ are indecomposable.

We claim that the semigroups $S_{M}$ are pairwise non-isomorphic. Suppose to the contrary that $S_{M} \cong S_{N}$ for some $M \neq N$ via isomorphism $\varphi: S_{M} \rightarrow S_{N}$. Without loss assume that there exists $m\in M\setminus N$.
Since $x^k$ is an idempotent, we have
$(1,x^k)^{mk}=(m,x)^k$.
Applying $\pi_1\varphi$, where $\pi_1$ stands for the projection to the first component $\N$, yields
\begin{equation}
\label{samefirstcoord}
m\cdot \pi_{1}\varphi(1,x^{k}) = \pi_{1}\varphi(m,x).
\end{equation}
Recalling that $m\neq 1$, this implies
$\pi_{1}\varphi(1,x^{k}) <  \pi_{1}\varphi(m,x)$,
and hence $\varphi(m,x)\neq (1,x^k)$.
Since $(1,x^k)$ is indecomposable in $S_N$ it follows that we must have
$\varphi(1,x^k)=(1,x^k)$. But then \eqref{samefirstcoord} yields
$m=\pi_1\varphi(m,x)\in S_N$, a contradiction.
Thus $M=N$, and $\bigl\{ S_M\::\: M\subseteq \N\setminus\{1\}\bigr\} $ is indeed an uncountable collection of pairwise non-isomorphic subsemigroups of $\N\times S$.

\ref{it:D3}$\Rightarrow$\ref{it:D1}
Suppose that $S$ is a union of groups,
i.e. that every $\H$-class $H_x$ ($x\in S$) is a group. From
\begin{equation*}
\N \times S =  \N \times \bigl( \bigcup_{x \in S} H_{x} \bigr) = \bigcup_{x \in S}\left( \N \times H_{x}\right),
\end{equation*}
we see that $\N \times S$ is a finite (disjoint) union of semigroups $\N\times H_x$, each 
of which has only countably many subsemigroups by Lemma \ref{NxG}. 
It follows that $\N \times S$ itself has only countably many subsemigroups.
\end{proof}

Turning to subdirect products, we have our final main result:

\begin{thm} The following are equivalent for a finite semigroup $S$:
\begin{enumerate}[label=\textup{(\roman*)}, widest=(iii), leftmargin=10mm]
\item 
$\N \times S$ has only countably many subdirect products;
\item 
$\N \times S$ has only countably many pairwise non-isomorphic subdirect products;
\item 
For every $s \in S$, there exists some $t \in S$ such that at least one of $ts = s$ or $st = s$ holds.
\end{enumerate}
\end{thm}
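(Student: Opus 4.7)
\emph{Overview.} Following the scheme of Theorem~\ref{thmD}, (i)$\Rightarrow$(ii) is trivial; (iii)$\Rightarrow$(i) rests on a fibrewise counting argument in the spirit of Lemma~\ref{NxG}; and (ii)$\Rightarrow$(iii) is the contrapositive, obtained by adapting the Theorem~\ref{thmD} construction to produce subdirect products.

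\emph{(iii)$\Rightarrow$(i).} Let $U \leq \N \times S$ be a subdirect product and let $T_s := \{n \in \N : (n,s) \in U\}$, so $U = \bigsqcup_{s \in S} T_s \times \{s\}$ with each $T_s$ nonempty. For each $s$ fix $t_s \in S$ with $t_s s = s$ (given by (iii); the case $s t_s = s$ is symmetric). Pick any $m_s \in T_{t_s}$: multiplying $(m_s, t_s)(n, s)$ in $U$ shows $T_s + m_s \subseteq T_s$. A subset $T \subseteq \N$ closed under some translation $+m$ is determined by its intersection with each of the $m$ residue classes modulo $m$---each being either empty or a tail $\{n_0, n_0+m, n_0+2m, \dots\}$---giving countably many options for fixed $m$ and still only countably many overall. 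Hence each $T_s$, and so $U$, has only countably many possibilities.

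\emph{(ii)$\Rightarrow$(iii).} Contrapositively, suppose $s \in S$ with $s \notin sS \cup Ss$. Then $S \setminus SS \neq \emptyset$: otherwise $S^n = S$ for all $n$, so the kernel $\bigcap_n S^n$ equals $S$; but the kernel of a finite semigroup is completely simple, and in a completely simple semigroup every element has a relative identity, contradicting the hypothesis on $s$. Pick any $a \in S \setminus SS$ (so $a$ is not idempotent, else $a = a^2 \in SS$) and let $k \geq 2$ be the least integer with $a^k$ idempotent. For each $M \subseteq \N \setminus \{1\}$ define
$$
U_M := \bigl\langle (1, g), (m, a) \::\: g \in S, m \in M \bigr\rangle \leq \N \times S.
$$
The generators $(1, g)$ already render $U_M$ subdirect. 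Because $a \notin SS$, no product in $U_M$ has second coordinate $a$, and an element of first coordinate $\geq 2$ that is not a single generator $(m, a)$ must be a product of at least two generators; the indecomposables of $U_M$ are therefore exactly $\{(1, g) : g \in S\} \sqcup \{(m, a) : m \in M\}$.

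\emph{Non-isomorphism and the main obstacle.} Suppose $\varphi : U_M \to U_N$ is an isomorphism and $m \in M \setminus N$. Idempotency of $a^k$ yields $(1, a^k)^{mk} = (mk, a^k) = (m, a)^k$; applying $\varphi$ and projecting to $\N$ gives $\pi_1 \varphi(m, a) = m \cdot \pi_1 \varphi(1, a^k) \geq 2$, so $\varphi(m, a)$ is an indecomposable of $U_N$ with first coordinate $\geq 2$, i.e., $\varphi(m, a) = (n, a)$ with $n \in N$. The main obstacle is to exclude $\varphi(1, a^k) = (n', a)$ with $n' \in N$: I intend to do so by computing $\varphi(k, a^k)$ in two ways, as $\varphi(1, a^k)^k$ and as $\varphi((1, a)^k) = \varphi(1, a)^k$. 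Writing $\varphi(1, a) = (p, q)$, this forces $p = n'$ and $q^k = a^k$; but indecomposability of $\varphi(1, a)$ with first coordinate $n' \geq 2$ forces $q = a$, so $\varphi(1, a) = (n', a) = \varphi(1, a^k)$ and hence $a = a^k$, contradicting $k \geq 2$ and $a \notin SS$. Thus $\varphi(1, a^k) = (1, g)$ and $n = m \in N$, a contradiction. As $M$ ranges over the uncountable family of subsets of $\N \setminus \{1\}$, this produces uncountably many pairwise non-isomorphic subdirect products, negating (ii).
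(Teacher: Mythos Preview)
Your argument for (iii)$\Rightarrow$(i) is correct and pleasantly direct: rather than proving finite generation as the paper does, you count the fibres $T_s$ directly, using that a nonempty subset of $\N$ closed under some translation has only countably many possibilities. This is a nice shortcut.

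However, your (ii)$\Rightarrow$(iii) contains a genuine gap. The step ``$S=SS$ implies $S=\bigcap_n S^n$ is the kernel, hence completely simple'' is wrong: for any finite monoid $M$ one has $M=M^2=\bigcap_n M^n$, yet $M$ need not be completely simple. Worse, the \emph{conclusion} you want, namely that $s\notin Ss\cup sS$ forces $S\setminus SS\neq\emptyset$, is itself false. Consider the five-element semigroup $S=\{e,a,b,s,0\}$ in which the only nonzero products are
\[
ee=e,\quad ae=a,\quad eb=b,\quad ab=s,
\]
with all remaining products equal to $0$. One checks associativity directly; then $e=ee$, $a=ae$, $b=eb$, $s=ab$, $0=aa$ show $S=SS$, while $xs=sx=0$ for every $x$, so $s$ has no one-sided identity. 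Thus there need be no indecomposable element $a\in S$ at all, and your construction of $U_M$ cannot get started.

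The paper sidesteps this by working with the bad element $s$ itself. It first observes that $ts\neq s$ and $st\neq s$ for all $t$ actually forces $usv\neq s$ for all $u,v\in S$ (via an iteration $u^nst^n=s$ and idempotency of some $u^j$). This weaker ``two-sided'' indecomposability of $s$ is exactly what is available; genuine indecomposability is not. To make the generators $(m,s)$ indecomposable despite $s$ possibly lying in $SS$, the paper restricts $M$ to odd integers $\geq 3$ and places the remaining second coordinates $t\in S\setminus\{s,s^k\}$ at first coordinate~$2$: then any nontrivial product landing at odd first coordinate must use a factor with second coordinate $s$ or $s^k$, and the property $usv\neq s$ rules this out. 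Your generating set $\{(1,g):g\in S\}$ is too permissive for this kind of parity argument.
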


\begin{proof}
The implication \ref{it:E1}$\Rightarrow$\ref{it:E2} is immediate.

\ref{it:E2}$\Rightarrow$\ref{it:E3} 
We prove the contrapositive:
if there exists $s \in S$ such that 
\begin{equation}
\label{eq:sass}
st\neq s\text{ and } ts\neq s \text{ for all } t\in S,
\end{equation}
then $\N \times S$ has uncountably many non-isomorphic subdirect products. 

We begin by claiming that \eqref{eq:sass} also implies
\begin{equation}
\label{eq:sprop}
ust \neq  s \text{ for all } u,t\in S.
\end{equation}
For, otherwise, if $s,t \in S$ were such that $ust = s$ for some $u,t\in S$,
then  we would have $u^{n}st^{n} = s$ for all $n \in \N $. Letting $j \in \N $ be such that $u^{j} = u^{2j}$, we see that
$s = u^{2j}st^{2j} = u^{j}st^{2j} = st^{j}$, contradicting \eqref{eq:sass}.

As $S$ is finite, we have $s^k=s^{2k}$ for some $k$,
and $k>1$ by \eqref{eq:sass}.
For $M \subseteq \N \setminus\bigl(2\N  \cup \{1\}\bigr)$, let
 \begin{equation*}
S_{M} := \bigl\langle(1,s^{k}),(2,t), (m,s): t \in S\setminus\{s,s^{k}\},\ m \in M \bigr\rangle \leq \N \times S.
\end{equation*}
Clearly, $S_M$ is a subdirect product, since $1$ belongs to the first projection of the generating set, while its second projection already contains the entire semigroup $S$.

Next we claim that all the generators are indecomposable in $S_{M}$. 
This is clear for $(1,s^k)$.
The only decomposable element in $S_{M}$ of the form $(2,t)$ is $(1,s^{k})^{2} = (2,s^{k})$, which is explicitly excluded from the generators. 
Finally, a generator of the form $(m,s)$ cannot be expressed as a non-trivial product of generators, as such a product cannot be just a power of $(2,t)$ because $m$ is odd,
and it cannot include a generator $(1,s^k)$ or $(m^\prime,s)$ because of 
\eqref{eq:sass}, \eqref{eq:sprop}.

Let $m\in M$ be arbitrary.
In the same way as in the proof of Theorem \ref{thmD}, applying $\pi_1\varphi$ to
$(1,s^k)^{mk}=(m,s)^k$, yields
\begin{equation}
\label{samefirstcoord2} 
m\cdot\pi_{1}\varphi(1,s^{k})=  \pi_{1}\varphi(m,s) \text{ for all } m\in M,
\end{equation}
and, as a consequence,
\begin{equation}
\label{geqcoords}
\pi_{1}\varphi(1,s^{k}) <  \pi_{1}\varphi(m,s) \text{ for all } m\in M.
\end{equation}

We now claim that if $M \neq N$, then $S_{M} \not \cong S_{N}$. 
Suppose to the contrary that $S_{M} \cong S_{N}$ via isomorphism $\varphi : S_{M} \rightarrow S_{N}$.

We claim that
\begin{equation}
\label{eq:pi1}
\pi_1\varphi(1,s^k)=1.
\end{equation}

Since $(1,s^{k})$ is indecomposable in $S_{M}$, the element $\varphi(1,s^{k})$ must be indecomposable in $S_{N}$, hence $\pi_{1}\varphi(1,s^{k}) \in \{1,2\} \cup N$.

Suppose that $\pi_{1}\varphi(1,s^{k}) = 2$. 
Let $m\in M$ be arbitrary.
Then $\pi_{1}\varphi(m,s) = 2m$ 
by \eqref{samefirstcoord2}. 
But $\varphi(m,s)$ is indecomposable in $S_{N}$, hence $\pi_{1}\varphi(m,s) \in \{1,2\}\cup N$. As $N$ was chosen to consist of only odd numbers, it follows that 
$\pi_{1}\varphi(m,s) = 2$, which implies $1=m\in M$, a contradiction with the choice of $M$. 

Now suppose that $2<\pi_{1}\varphi(1,s^{k})\in N$. 
Then, by (\ref{geqcoords}), it follows that $\pi_{1}\varphi(m,s) > 2$ for every $m \in M$. 
In other words
\[
\varphi\Bigl(\bigl\{(1,s^k)\bigr\}\cup\bigl\{(m,s)\::\: m\in M\bigr\}\Bigr)\subseteq \bigl\{ (n,s)\::\: n\in N\bigr\}.
\]
Since the generators of both $S_M$ and $S_N$ are indecomposable, and since $\varphi$ is an isomorphism, we would have to have
\[
\varphi\Bigl(\bigl\{ (2,t)\::\: t\in S\setminus \{s,s^k\}\bigr\}\Bigr)\supseteq \bigl\{(1,s^k)\bigr\}\cup 
\bigl\{ (2,t)\::\: t\in S\setminus \{s,s^k\}\bigr\},
\]
which is clearly impossible on account of their sizes.
This completes the proof of \eqref{eq:pi1}.

Now assume that $M\neq N$, and, without loss, that there exists $m\in M\setminus N$.
By \eqref{samefirstcoord2}, $m=\pi_{1}\varphi(m,x) \in N$, a contradiction. 
It follows that
$\bigl\{S_{M} \::\: M \subseteq \N \setminus(2\N \cup\{1\}) \bigr\}$
is an uncountable collection of pairwise non-isomorphic subdirect products of $\N \times S$.

\ref{it:E3}$\Rightarrow$\ref{it:E1} 
We will prove that every subdirect product
$T \leq \N \times S$ is finitely generated, and the assertion will follow.

For every $n\in\N$ consider the set
\begin{equation*} 
S_{n} = \{s \in S : (n,s) \in T\}.
\end{equation*}
As $T$ is subdirect, for every $s \in S$ we can choose $m_{s} \in \N $ such that $(m_{s},s) \in T$. 
Let $m$ be the least common multiple of all of the $m_{s}$. 

We claim that
\begin{equation}
\label{eq:Sninc}
S_n \subseteq S_{n+m} \text{ for all } n\in \N.
\end{equation}
Indeed, suppose $s\in S_n$, so that $(n,s)\in T$.
By assumption, there exists $t\in S$ such that $st=s$ or $ts=s$; without loss assume $st=s$.
Then, writing $m=lm_t$ for some $l \in N$, we have
$(n+m,s)=(n,s)(m_t,t)^l\in T$, and hence $s\in S_{n+m}$, as required.

Thus, for every $n\in\N$, we have an infinite chain
$S_n\subseteq S_{n+m}\subseteq S_{n+2m}\subseteq\dots$, which must eventually stabilise
because $S$ is finite.
It follows then that the entire sequence $(S_n)_{n\in \mathbb{N}}$ is eventually periodic
with period $m$, i.e.
there exists $n_0\in\N$ such that
\begin{equation}
\label{eq:Sneq}
S_n=S_{n+m} \text{ for all } n\geq n_0.
\end{equation}

We now claim that
\begin{equation}
\label{eq:Tgen}
T=\langle X\rangle \text{ where }
X:= \bigcup_{1\leq n<n_0+m} \{n\}\times S_n .
\end{equation}
It is clear that $\langle X\rangle\subseteq T$.
To prove the converse inclusion, consider an arbitrary $(q,s)\in T$.
We prove by induction on $q$ that $(q,s)\in\langle X\rangle$.

If $q<n_0+m$ the element already belongs to $X$, and there is nothing to prove.
So suppose $q\geq n_0+m$. Then $q-m\geq n_0$, and hence
$S_{q-m}=S_q$ by \eqref{eq:Sneq}.
It now follows that $(q-m,s)\in T$, so, by induction, $(q-m,s)\in\langle X\rangle$.
By assumption, there exists $t\in S$ with $st=s$ or $ts=s$; without loss assume the former is the case.
Write $m=lm_t$, recall that $(m_t,t)\in X$, and then we have
$(q,s)=(q-m,s)(m_t,t)^l$.
Noting that $(m_t,t)\in X$, because $m_t\leq m$, we conclude that $(q,s)\in\langle X\rangle$,
completing the proof of finite generation of $T$, and hence of the theorem.
\end{proof}

\section{Some further questions}

Combinatorial properties of subdirect products of semigroups have so far been somewhat neglected in literature.
We believe that they offer fertile ground for future research.
By way of encouraging such work, we offer a few questions which seem to naturally offer themselves following the results of this paper.

\begin{que}
Is it possible to characterise all pairs of finitely generated commutative semigroups $S$, $T$ such that
there are only countably many pairwise non-isomorphic subdirect products of $S$ and $T$?
\end{que}

We remind the reader that finitely generated commutative semigroups are finitely presented (see \cite[Section VI.1]{grillet}) and hence there are only countably many possible choices for $S$ and $T$.

\begin{que}
Given a fixed finitely generated infinite commutative semigroup $S$, is it possible to characterise all finite semigroups $T$ such that $S\times T$ has only countably many pairwise non-isomorphic subsemigroups or subdirect products? Do these characterisations depend on $S$?
\end{que}

\begin{que}
How many pairwise non-isomorphic subsemigroups and subdirect products does $F\times F$ contain, where $F$ is a finitely generated free semigroup in some other well known semigroup varieties, such as inverse semigroups or completely regular semigroups?
\end{que}

In group theory, subdirect products of several factors which project (virtually) onto any \emph{pair} of factors turn out to be easier to handle than the general subdirect products;
see for example \cite{bridson09,bridson13}. In \cite{pmnr} it is shown that the situation is likely to be more complicated for semigroups. In the context of subdirect products of copies of $\N$ we ask:

\begin{que}
Is it true that for every $k\in \N$ there are uncountably many pairwise non-isomorphic subdirect products of $\N^k$ which project onto any $k-1$ factors?
\end{que}

\end{document}